\documentclass{amsart}
\usepackage{bm}
\usepackage{amsthm}

\newtheorem{theorem}{Theorem}[section]
\newtheorem*{theorem*}{Theorem}

\newtheorem{lemma}[theorem]{Lemma}
\newtheorem{proposition}[theorem]{Proposition}
\newtheorem*{proposition*}{Proposition}

\newcommand{\dist}{\text{d}}

\DeclareMathOperator{\en}{end}
\DeclareMathOperator{\B}{B}
\DeclareMathOperator{\Ends}{Ends}

\theoremstyle{remark}

\theoremstyle{definition}
\newtheorem{definition}[theorem]{Definition}
\newtheorem*{definition*}{Definition}
\newtheorem{example}[theorem]{Example}


\begin{document}

\title{Sequential ends of metric spaces}

\author{M.~DeLyser}
\address{Saint Francis University, Loretto, PA 15940}
\email{mrd100@@francis.edu}

\author{B.~LaBuz}
\address{Saint Francis University, Loretto, PA 15940}
\email{blabuz@@francis.edu}

\author{M.~Tobash}
\address{Saint Francis University, Loretto, PA 15940}
\email{mst100@@francis.edu}

\begin{abstract}
We develop an analog to the ends of a metric space for the category of coarse metric spaces and show that it is equivalent to a previously defined coarse invariant.
\end{abstract}

\maketitle
\tableofcontents

\section{Introduction}
The papers \cite{borno}, \cite{coarse}, and \cite{direct} develop a coarse invariant of metric spaces (Definition \ref{SigmaDef} in this paper). Given a metric space $X$ we define, using coarse functions $f:\mathbb N\to X$, a coarse analog to the ends of $X$ that we call the sequential ends (Definition \ref{SequentialEnds}). We show the set of sequential ends is equivalent to the above invariant (Theorem \ref{MainTheorem}).

We work in the coarse category (for metric spaces) defined by Roe \cite{Roe}. A function $f:X\to Y$ between metric spaces is \textit{bornologous} if for each $N>0$ there is an $M>0$ such that for every $x,y\in X$, if $\dist(x,y)\leq N$, then $\dist(x,y)\leq M$. A function is (metrically) \textit{proper} if the inverse image of bounded sets are bounded. A \textit{coarse} map is one that is bornologous and proper. Two functions $f:X\to Y$ and $g:X\to Y$ are \textit{close} if there is some constant $K>0$ with $\dist(f(x),g(x))\leq K$ for all $x\in X$.

The objects of the coarse category are metric spaces and the morphisms are close equivalence classes of coarse maps. Thus two metric spaces $X$ and $Y$ are coarsely equivalent if there are coarse maps $f:X\to Y$ and $g:Y\to X$ such that $g\circ f$ is close to the identity function on $X$ and $f\circ g$ is close to the identity function on $Y$.

Suppose $N>0$. Given a metric space $X$ and a basepoint $x_0\in X$, an ${N}$\textit{-sequence} in $X$ based at $x_0$ is an infinite list $(x_i)=x_0,x_1,\ldots$ of points in $X$ such that $\dist(x_i,x_{i+1})\leq N$ for all $i\geq 0$. We can interpret bornologous functions as functions such that for each $N>0$ there is an $M>0$ such that $N$-sequences in $X$ are sent to $M$-sequences in $Y$.

The invariant from \cite{borno}, \cite{coarse}, and \cite{direct} is defined in terms of $N$-sequences. Since we are studying the coarse properties of spaces we are only interested in sequences that go to infinity. An $N$-sequence $(x_i)$ goes to infinity if $\lim_{i\to \infty}\dist(x_0,x_i)=\infty$.

In this paper (Proposition \ref{CoarseSequences}) we note that the set of all sequences that are $N$-sequences in $X$ for some $N>0$ that go to infinity are precisely the functions $f:\mathbb N\to X$ that are coarse. Thus we call such sequences \textit{coarse sequences}. We are interested in an equivalence relation between coarse sequences that determines if they are ``going the same way'' to infinity. Requiring that the functions are close is too restrictive. For example, we would want $(n)$ and $(2n)$ in $\mathbb R$ to be equivalent.

In \cite{borno} the equivalence relation is taken to be the symmetric transitive closure of the relation ``subsequence.'' In that paper the sequences are restricted to $N$-sequences for a fixed $N>0$ based at a point $x_0$ to give the set of equivalence classes $\sigma_N(X,x_0)$. The invariant is then the direct limit $\varinjlim\sigma_N(X,x_0)$. The case where $\{\sigma_N(X,x_0)\}$ stabilizes is treated in \cite{borno} and \cite{coarse}. The full structure of the direct sequence is exploited in \cite{direct}.

Given this new view of these sequences as coarse sequences we give the definition independent of $N$. Given coarse sequences $s$ and $t$ in $X$ starting at $x_0$, relate $s$ to $t$ if $s$ is a subsequence of $t$. Denote the symmetric transitive closure of this relation by $\approx$.

\begin{definition}\label{SigmaDef}
Define $\sigma(X,x_0)$ to be the set of equivalence classes of coarse sequences in $X$ starting at $x_0$ under $\approx$.
\end{definition}

Since we are using the symmetric transitive closure, if $s\approx t$ then there are coarse sequence $s=s_1,s_2,\ldots,s_n=t$ such that for each $i$, either $s_i$ is a subsequence of $s_{i+1}$ or $s_{i+1}$ is a subsequence of $s_i$. Since the list is finite we can always find a $K>0$ so that each $s_i$ is a $K$-sequence. Thus this definition is equivalent to the one in \cite{direct}. In that paper it is shown that the set $\sigma(X,x_0)$ is independent of basepoint and invariant under coarse equivalences.

\section{Sequential ends}

Let $X$ be a topological space. A ray in $X$ is a continuous map $r:[0,\infty)\to X$. The following definition of ends is from \cite{BH}.

\begin{definition*}
Two (topologically) proper rays $r_1$ and $r_2$ are equivalent (converge to the same end) if for every compact subset $C$ of $X$ there is $N\in \mathbb N$ such that $r_1[N,\infty)$ and $r_2[N,\infty)$ are contained in the same path component of $X-C$.
\end{definition*}

We call the equivalence class of a proper ray $r$ an end of $X$ and denote it by $\en(s)$. Denote the set of ends in $X$ by $\Ends(X)$.

We form an analogous definition for the coarse category. First note the coarse analog to $[0,\infty)$ is $\mathbb N$ (we often take $\mathbb N=\{0,1,2,\ldots\}$ for convenience). Thus the analog to a ray should be a coarse function $f:\mathbb N\to X$.  But these are precisely $N$-sequences that go to infinity.

\begin{proposition}\label{CoarseSequences}
Suppose $s$ is a sequence in a metric space $X$. Write the sequence $s$ as a function $f:\mathbb N\rightarrow X$ by setting $f(n)=s_n$.

\begin{enumerate}
\item $s$ is an $N$-sequence for some $N>0$ if and only if $f$ is bornologous.
\item $s$ goes to infinity if and only if $f$ is proper.
\end{enumerate}
Thus $s$ is an $N$-sequence for some $N>0$ that goes to infinity if and only if $f$ is coarse.
\end{proposition}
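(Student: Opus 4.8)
The plan is to prove the two biconditionals (1) and (2) independently, each by handling the two implications directly from the definitions; the final claim is then immediate, since a coarse map is by definition one that is both bornologous and proper, so $f$ is coarse exactly when $s$ is both an $N$-sequence for some $N>0$ and goes to infinity.

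For (1), recall that $\mathbb{N}$ carries the metric $\dist(m,n)=|m-n|$. For the forward implication, suppose $s$ is an $N$-sequence, so that $\dist(f(i),f(i+1))\le N$ for all $i$. Given any $K>0$, if $\dist(m,n)=|m-n|\le K$ with $m\le n$, I would chain the triangle inequality along the consecutive indices $m,m+1,\ldots,n$ to obtain $\dist(f(m),f(n))\le (n-m)N\le KN$; taking $M=KN$ witnesses that $f$ is bornologous. For the converse, suppose $f$ is bornologous and apply the defining property at the scale $K=1$: there is an $M>0$ with $\dist(f(m),f(n))\le M$ whenever $|m-n|\le 1$, and in particular $\dist(f(i),f(i+1))\le M$ for every $i$, so $s$ is an $M$-sequence.

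For (2), the key observation is that a subset of $\mathbb{N}$ is bounded if and only if it is finite, equivalently contained in some initial segment $\{0,1,\ldots,K\}$. For the forward implication, suppose $s$ goes to infinity, i.e.\ $\dist(x_0,f(n))\to\infty$. Any bounded $B\subseteq X$ lies in a ball $\B(x_0,R)$, so $f^{-1}(B)\subseteq\{\,n:\dist(x_0,f(n))\le R\,\}$; since the distances diverge, this set is finite and hence bounded, so $f$ is proper. For the converse, I would argue by contrapositive: if $s$ does not go to infinity there is an $R$ and infinitely many indices $n$ with $\dist(x_0,f(n))\le R$, so $f^{-1}(\B(x_0,R))$ is an infinite, hence unbounded, preimage of a bounded set, contradicting properness.

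Both parts amount to unwinding the definitions, so I do not anticipate a serious obstacle. The only points requiring care are exploiting the integer (unit-step) structure of $\mathbb{N}$ in the forward direction of (1) — where the consecutive-gap bound $N$ is what controls arbitrary pairs within distance $K$ — and the identification of bounded subsets of $\mathbb{N}$ with finite ones used throughout (2). With these in hand the final equivalence follows by combining (1) and (2).
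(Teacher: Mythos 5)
Your proposal is correct and follows essentially the same route as the paper: part (1) by chaining the triangle inequality (with the same bound $KN$) and applying the bornologous condition at scale $1$, and part (2) by exploiting that bounded subsets of $\mathbb{N}$ are finite and that a sequence failing to go to infinity has infinitely many terms in some ball. The only cosmetic difference is that the paper phrases both directions of (2) as a contrapositive, while you prove the forward implication directly, but the underlying argument is identical.
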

\begin{proof}
\text{ }

\begin{enumerate}
\item $(\Rightarrow)$ We have an $N$-sequence $s$ for some $N>0$. We show for all $M>0$ there exists $R>0$ such that if $|i-j|\leq M$, then $d(f(i), f(j))\leq R$. Suppose $M>0$. Set $R=MN$. Without loss of generality assume $i<j$ and suppose $j-i\leq M$. Since $s$ is an $N$-sequence, $d(s_k,s_{k+1})\leq N$ for all $k\in\mathbb N$. Then $(f(i),f(j))=d(s_i,s_j) \leq MN$.

\noindent $(\Leftarrow)$ We have a bornologous function $f$ written $f:\mathbb N\rightarrow X$. There is $N$ such that if $|i-j|\leq 1$ then $d(f(i),f(j))\leq N$. Then $d(s_i,s_{i+1})\leq N$, thus $s$ is an $N$-sequence.

\item We prove the contrapositive.

\noindent $(\Rightarrow)$ Suppose $s$ does not go to infinity. Then there exists $R>0$ such that for all $M>0$ there exists $i_M\geq M$ so that $d(x_0,s_{i_M})\leq R$. Then $\{s_{i_M}\}_{M\in \mathbb N}$ is bounded, but the inverse image under $f$ contains $\{i_M\}_{M\in \mathbb N}$ and thus is unbounded.

\noindent $(\Leftarrow)$ Suppose $f$ is not proper. Then there is a bounded subset of $X$ whose inverse image is unbounded in $\mathbb N$. Thus $s$ has a bounded subsequence and does not go to infinity.
\end{enumerate}
\end{proof}

\begin{definition}
A sequence $(s_i)_{i\in\mathbb N}$ in $X$ is a coarse sequence if $f:\mathbb N\to X$ is coarse where $f(n)=s_n$.
\end{definition}

The analog to a path in $X$ is a $K$\textit{-chain}, a finite sequence $c_1,\ldots,c_n$ such that $\dist(c_i,c_{i+1})\leq K$. Two points $x,y\in X$ are in the same $K$-chain component if they can be joined by a $K$-chain. The set of $K$-chain components is a partition of $X$.

We now formulate our analogous definition of ends. We fix our attention to sequences that start at a fixed basepoint $x_0\in X$. The based theory is equivalent to unbased (see Proposition \ref{Unbased}).

\begin{definition}\label{SequentialEnds}
Two coarse sequences $s$ and $t$ starting at $x_0$ converge to the same end if there is a $K>0$ such that for all $R>0$ there is an $N\in\mathbb N$ such that $\{s_i\}_{i\geq N}$ and $\{t_i\}_{i\geq N}$ are contained in the same $K$-chain component of $X-\B(x_0,R)$.
\end{definition}

This defines an equivalence relation on the set of coarse sequences starting at $x_0$. Denote the equivalence class of such a sequence $s$ by $\en(s)$. We will show that the set of equivalence classes is identical to $\sigma(X,x_0)$ (Theorem \ref{MainTheorem}). Thus we use this notation for the set of sequential ends.

We state an equivalent formulation which resembles \cite[Lemma I.8.28]{BH}. The proof is left as an exercise.

\begin{proposition}\label{LevelChains}
Two coarse sequences $s$ and $t$ starting at $x_0$ converge to the same end if and only if there is a $K>0$ such that for all $R>0$ there is an $N\in\mathbb N$ so that for all $n\geq N$, $s_n$ and $t_n$ can be joined by a $K$-chain outside $\B(x_0,R)$.
\end{proposition}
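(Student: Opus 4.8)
The plan is to prove the two implications separately. The forward direction—that the formulation in Definition \ref{SequentialEnds} implies the level-wise formulation—comes essentially for free with the \emph{same} constant $K$, whereas the reverse direction is where the coarseness hypotheses on $s$ and $t$ do the real work, and it will in general require enlarging the constant.

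For the forward direction, suppose $s$ and $t$ converge to the same end in the sense of Definition \ref{SequentialEnds}, witnessed by some $K>0$. Fix $R>0$ and take the corresponding $N\in\mathbb N$ so that $\{s_i\}_{i\geq N}$ and $\{t_i\}_{i\geq N}$ all lie in a single $K$-chain component of $X-\B(x_0,R)$. Then for each individual $n\geq N$ the points $s_n$ and $t_n$ lie in this common component, and lying in the same $K$-chain component is exactly the statement that they can be joined by a $K$-chain inside $X-\B(x_0,R)$. Hence the same constant $K$ witnesses the level-wise condition.

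For the reverse direction, suppose the level-wise condition holds with constant $K$. The new ingredient is that, since $s$ and $t$ are coarse, they are bornologous and proper. By part (1) of Proposition \ref{CoarseSequences}, bornologousness yields constants $N_s,N_t>0$ with $\dist(s_n,s_{n+1})\leq N_s$ and $\dist(t_n,t_{n+1})\leq N_t$ for all $n$; by part (2) the sequences go to infinity, so for any $R$ their tails eventually leave $\B(x_0,R)$. Set $K'=\max(K,N_s,N_t)$. Given $R>0$, choose $N$ large enough that (i) the level-wise $K$-chains joining $s_n$ to $t_n$ outside $\B(x_0,R)$ exist for every $n\geq N$, and (ii) $s_n,t_n\notin\B(x_0,R)$ for every $n\geq N$. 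I then claim $\{s_i\}_{i\geq N}\cup\{t_i\}_{i\geq N}$ lies in a single $K'$-chain component of $X-\B(x_0,R)$. For consecutive indices $n,n+1\geq N$, the two-point sequence $s_n,s_{n+1}$ is itself a $K'$-chain (as $\dist(s_n,s_{n+1})\leq N_s\leq K'$) lying entirely outside $\B(x_0,R)$ by (ii); hence all the $s_i$ with $i\geq N$ lie in one $K'$-chain component, and likewise all the $t_i$. Finally, for any single $n\geq N$ the $K$-chain from (i) is also a $K'$-chain, identifying the $s$-component with the $t$-component. This yields Definition \ref{SequentialEnds} with constant $K'$.

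The main point to watch is keeping every chain inside $X-\B(x_0,R)$: the individual step $s_n\to s_{n+1}$ is short by bornologousness, but to use it as a chain outside the ball we must know both of its endpoints avoid $\B(x_0,R)$, which is precisely where the "goes to infinity"/properness half of coarseness is used through condition (ii). Apart from this bookkeeping and the passage from $K$ to $K'$, the argument is routine.
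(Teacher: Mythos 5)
Your proof is correct. The paper leaves this proposition as an exercise, and your argument is the intended one: the forward implication is immediate with the same constant $K$, and the reverse implication enlarges the constant to $K'=\max(K,N_s,N_t)$, using bornologousness to link consecutive terms of each sequence into a single $K'$-chain component and the level-wise $K$-chain to merge the two components. One minor remark: your condition (ii) is redundant, since a $K$-chain joining $s_n$ to $t_n$ outside $\B(x_0,R)$ contains $s_n$ and $t_n$ among its points, so (i) already forces $s_n,t_n\notin\B(x_0,R)$; consequently properness of $s$ and $t$ is never actually needed in this proposition, only bornologousness.
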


In fact we do not need all $n\geq N$. 

\begin{lemma}
Suppose $s$ and $t$ are coarse sequences that start at $x_0$. Suppose there is $K>0$ such that for all $R>0$ there are $i,j\in\mathbb N$ such that $s_i$ and $t_j$ can be joined by a $K$-chain outside $\B(x_0,R)$. Then for all $R>0$ there is an $N\in\mathbb N$ such that for all $n\geq N$, $s_n$ and $t_n$ can be joined by a $K$-chain outside $\B(x_0,R)$.
\end{lemma}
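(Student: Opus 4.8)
The plan is to fix $R>0$ and reduce the desired conclusion to a statement about $K$-chain components of $X-\B(x_0,R)$. First I would record the two features of a coarse sequence supplied by Proposition \ref{CoarseSequences}: each of $s,t$ is an $L$-sequence for some common $L>0$ (bornologous), and each leaves every ball eventually (proper), so there is $N_0\in\mathbb N$ with $s_n,t_n\notin\B(x_0,R)$ for all $n\ge N_0$. It is convenient to assume $K\ge L$, and I would arrange this at the outset (replacing $K$ by $\max\{K,L\}$ if necessary, under which the hypothesis is preserved since every $K$-chain is a $\max\{K,L\}$-chain), since this is exactly what makes the tail of each sequence a single $K$-chain. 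Indeed, the points $s_{N_0},s_{N_0+1},\dots$ are consecutive within $L\le K$ and all lie outside $\B(x_0,R)$, so they lie in one $K$-chain component $A_s$ of $X-\B(x_0,R)$; likewise the tail of $t$ lies in one component $A_t$. Since every $s_n$ (resp.\ $t_n$) with $n\ge N_0$ then lies in $A_s$ (resp.\ $A_t$), the whole statement collapses to the single assertion $A_s=A_t$.

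The heart of the argument is to glue $A_s$ to $A_t$ using the hypothesis, and here lies the main obstacle: the hypothesis hands me indices $i,j$ that I do not control, and an early term $s_i$ can be far from $x_0$ yet fail to sit in the tail component $A_s$ (the sequence may dip back inside $\B(x_0,R)$ between index $i$ and the tail). I would defuse this by applying the hypothesis not at radius $R$ but at a larger radius $R^*$ chosen to swallow all the finitely many early terms: let $R^*>R$ exceed $\dist(x_0,s_k)$ and $\dist(x_0,t_k)$ for every $k<N_0$. The hypothesis at radius $R^*$ produces $i,j$ and a $K$-chain $P$ from $s_i$ to $t_j$ lying outside $\B(x_0,R^*)$. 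Because the endpoints of $P$ lie outside $\B(x_0,R^*)$, we have $\dist(x_0,s_i)>R^*$ and $\dist(x_0,t_j)>R^*$, which by the choice of $R^*$ forces $i\ge N_0$ and $j\ge N_0$. Thus $s_i\in A_s$ and $t_j\in A_t$, while $P$, being outside the larger ball $\B(x_0,R^*)\supseteq\B(x_0,R)$, is in particular a $K$-chain outside $\B(x_0,R)$. Hence $A_s$ and $A_t$ share the $K$-chain-connected pair $s_i,t_j$, giving $A_s=A_t$.

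With $A_s=A_t$ in hand the proof finishes immediately: for every $n\ge N_0$ the points $s_n$ and $t_n$ lie in the same $K$-chain component of $X-\B(x_0,R)$ and so are joined by a $K$-chain outside $\B(x_0,R)$, which is the conclusion with $N=N_0$. I expect the index-control step to be the only real difficulty, and the enlargement to $R^*$ is the device that converts the hypothesis's uncontrolled indices into tail indices. The one point I would be careful to state explicitly is the reduction $K\ge L$: without it the tail of a sequence need not be a single $K$-chain component, and in fact the literal statement with the same $K$ can fail, so the coarseness constant of $s$ and $t$ must be absorbed into $K$ before the component argument is run.
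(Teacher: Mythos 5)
Your proof is correct, and it takes a genuinely different route from the paper's. The paper argues by contradiction: assuming the conclusion fails at some radius $R$, it repeatedly invokes the hypothesis at ever larger radii $R_1, R_2,\ldots$ to build, over and over, segments of $s$ or $t$ that are forced to dip back inside $\B(x_0,R)$, and concludes by induction that $s$ or $t$ fails to go to infinity --- contradicting properness. You instead give a direct argument: properness supplies a tail index $N_0$ past which both sequences stay outside $\B(x_0,R)$; bornologousness (after the reduction $K\ge L$) makes each tail a single $K$-chain, hence contained in one $K$-chain component $A_s$, resp.\ $A_t$, of $X-\B(x_0,R)$; and then a \emph{single} application of the hypothesis, at a radius $R^*$ large enough to swallow all pre-tail terms, forces the supplied indices $i,j$ into the tails and glues $A_s=A_t$. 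This one-shot radius-enlargement trick is exactly what replaces the paper's infinite iteration, and it yields a shorter proof whose informal step (the paper's closing ``by induction we see\ldots'') is avoided entirely. Both proofs make the same reduction $K\ge\max\{N_1,N_2\}$ that the paper phrases as ``we can assume''; your explicit remark that the lemma is literally false without absorbing the coarseness constants of $s$ and $t$ into $K$ is accurate and is a point the paper leaves implicit.
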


\begin{proof}
Suppose $s$ is an $N_1$-sequence and $t$ is an $N_2$-sequence. Fix $K>0$ so that the hypothesis is satisfied. We can assume $K\geq\max\{N_1,N_2\}$. Suppose to the contrary that there is an $R>0$ such that for all $N\in\mathbb N$ there is an $n\geq N$ such that $s_n$ and $t_n$ cannot be joined by a $K$-chain outside $\B(x_0,R)$. Choose $i,j\in\mathbb N$ such that $s_i$ and $t_j$ can be joined by a $K$-chain outside $\B(x_0,R)$. Let $c_1,\ldots,c_k$ be such a chain. Now there is an $n\geq\max\{i,j\}$ so that $s_n$ and $t_n$ cannot be joined by a $K$-chain outside $B(x_0,R)$. Consider the $K$-chain $s_n,\ldots,s_{i+1},c_1,\ldots,c_k,t_{j+1},\ldots,t_n$.  Since $s_n$ and $t_n$ cannot be joined by a $K$-chain outside $\B(x_0,R)$, either $s_{i+1},\ldots,s_n$ or $t_{j+1},\ldots,t_n$ must travel inside $\B(x_0,R)$. 

Let $R_1=\max\{d(x_0,s_k)\}_{k\leq n}\cup\{d(x_0,t_k)\}_{k\leq n}$. Note $R_1\geq R$ since $i,j\leq n$ and $s_i$ and $t_j$ must lie outside $\B(x_0,R)$. Choose $s_{i_1}$ and $t_{j_1}$ that can be joined by a $K$-chain outside $\B(x_0,R_1)$. Note ${i_1},{j_1}>n$. Now there is an $n_1\geq\max\{i_1,j_1\}$ so that $s_{n_1}$ and $t_{n_1}$ cannot be joined by a $K$-chain outside $\B(x_0,R)$, thus either $s_{i_1+1},\ldots,s_{n_1}$ or $t_{j_1+1},\ldots,t_{n_1}$ must travel inside $\B(x_0,R)$. 

By induction we see that either $s$, $t$, or both cannot go to infinity.
\end{proof}

Thus we have another formulation with a weaker condition so it is convenient in the if direction.

\begin{proposition}\label{OnePoint}
Two coarse sequences $s$ and $t$ starting at $x_0$ converge to the same end if and only if there is a $K>0$ such that for all $R>0$ there are $i,j\in\mathbb N$ so that $s_i$ and $t_j$ can be joined by a $K$-chain outside $\B(x_0,R)$.
\end{proposition}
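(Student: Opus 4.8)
The plan is to prove the two implications separately, letting Proposition \ref{LevelChains} and the preceding Lemma do essentially all of the work; the Proposition is really just the composite of those two results.

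For the forward (``only if'') direction, I would assume $s$ and $t$ converge to the same end. By Proposition \ref{LevelChains} this yields a $K>0$ such that for every $R>0$ there is an $N\in\mathbb N$ with $s_n$ and $t_n$ joined by a $K$-chain outside $\B(x_0,R)$ for all $n\geq N$. In particular, for each such $R$ I may take $i=j=N$, and the pair $(s_i,t_j)$ already witnesses the weaker condition. So this direction is immediate: it amounts to quoting Proposition \ref{LevelChains} and simply forgetting the ``for all $n\geq N$'' strengthening.

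For the backward (``if'') direction, I would observe that the hypothesis---the existence of a single $K>0$ such that for every $R>0$ there are indices $i,j$ with $s_i,t_j$ joined by a $K$-chain outside $\B(x_0,R)$---is verbatim the hypothesis of the preceding Lemma. Applying that Lemma produces, for the \emph{same} $K$, the statement that for every $R>0$ there is an $N$ so that $s_n,t_n$ are joined by a $K$-chain outside $\B(x_0,R)$ for all $n\geq N$. This is precisely the criterion of Proposition \ref{LevelChains}, so $s$ and $t$ converge to the same end.

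The genuine obstacle in the argument has already been isolated and discharged: it is the upgrade from ``some pair of indices $i,j$'' to ``all sufficiently large $n$,'' carried out in the Lemma by the contradiction-and-induction argument that otherwise forces $s$ or $t$ to fail to go to infinity. Once that Lemma is available, the Proposition reduces to matching hypotheses against conclusions, and I expect the final write-up to be only a few lines long.
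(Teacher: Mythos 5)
Your proposal is correct and is exactly the argument the paper intends: Proposition \ref{OnePoint} is stated without proof precisely because it follows by weakening Proposition \ref{LevelChains} in the forward direction and by combining the preceding Lemma with Proposition \ref{LevelChains} in the backward direction. Your write-up makes this implicit reasoning explicit and identifies the Lemma as the real content, just as the paper's phrase ``another formulation with a weaker condition'' suggests.
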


We now state an unbased version of the sequential ends that more closely matches the definition from \cite{BH}.

\begin{definition}
Two coarse sequences $s$ and $t$ in $X$ converge to the same end if there is a $K>0$ such that for all bounded sets $B\subset X$ there is an $N\in\mathbb N$ such that $\{s_i\}_{i\geq N}$ and $\{t_i\}_{i\geq N}$ are contained in the same $K$-chain component of $X-B$.
\end{definition}

We state equivalent formulations similar to Propositions \ref{LevelChains} and \ref{OnePoint} without proof.

\begin{proposition}\label{Unbased}
Suppose $s$ and $t$ are two coarse sequences in $X$. The following are equivalent.

\begin{enumerate}
\item $s$ and $t$ converge to the same end.

\item There is a $K>0$ such that for all bounded sets $B\subset X$ there is an $N\in\mathbb N$ so that for all $n\geq N$, $s_n$ and $t_n$ can be joined by a $K$-chain outside $B$.

\item There is a $K>0$ such that for all bounded sets $B\subset X$ there is an $n\in\mathbb N$ so that $s_n$ and $t_n$ can be joined by a $K$-chain outside $B$.

\item\label{UnbasedProp4} There is a $K>0$ such that for all bounded sets $B\subset X$ there are $i,j\in\mathbb N$ so that $s_i$ and $t_j$ can be joined by a $K$-chain outside $B$.
\end{enumerate}
\end{proposition}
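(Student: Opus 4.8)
The plan is to prove Proposition~\ref{Unbased} by establishing the cycle of implications $(1)\Rightarrow(2)\Rightarrow(3)\Rightarrow(4)\Rightarrow(1)$, closely mirroring the based versions already in hand. The implication $(1)\Rightarrow(2)$ should parallel the (omitted) proof of Proposition~\ref{LevelChains}: starting from the fact that the full tails $\{s_i\}_{i\geq N}$ and $\{t_i\}_{i\geq N}$ lie in a common $K$-chain component of $X-B$, one upgrades this to a uniform level-by-level statement by using that $s$ and $t$ are themselves $K$-chains (after enlarging $K$ to exceed both their bornologous constants), so that once $s_N$ and $t_N$ are joined by a $K$-chain outside $B$, every later pair $s_n,t_n$ can be joined by prepending and appending the relevant tail segments. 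The implications $(2)\Rightarrow(3)$ and $(3)\Rightarrow(4)$ are immediate weakenings: $(2)$ gives a common chain for all $n\geq N$, hence in particular for $n=N$, which is $(3)$; and $(3)$ is the special case $i=j=n$ of $(4)$.

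The only implication requiring real work is $(4)\Rightarrow(1)$, and this is where I would lean on the unbased analog of the Lemma preceding Proposition~\ref{OnePoint}. First I would observe that the Lemma, though stated for balls $\B(x_0,R)$, is really a statement about exhausting bounded sets, and its proof goes through verbatim with $\B(x_0,R)$ replaced by an arbitrary bounded set $B$: the crucial inductive construction only used that points outside an ever-larger ball must eventually be visited, and the same argument shows that if one could always join some $s_i$ to some $t_j$ by a $K$-chain outside arbitrarily large bounded sets, yet the uniform tail condition failed, then $s$ or $t$ would fail to go to infinity, contradicting that they are coarse. Granting this unbased Lemma, condition $(4)$ supplies exactly its hypothesis, so we obtain the conclusion that for every bounded $B$ there is an $N$ with $s_n,t_n$ joinable by a $K$-chain outside $B$ for all $n\geq N$; since every such chain witnesses that $s_n$ and $t_n$ lie in the same $K$-chain component of $X-B$, the two tails $\{s_i\}_{i\geq N}$ and $\{t_i\}_{i\geq N}$ sit in that single component, which is precisely $(1)$.

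The main obstacle I anticipate is not any single implication but the bookkeeping around the quantifier ``for all bounded sets $B$'' replacing ``for all $R>0$.'' In the based setting the balls $\B(x_0,R)$ form a totally ordered exhaustion, and the Lemma's induction exploited the nesting $\B(x_0,R)\subseteq\B(x_0,R_1)$ for $R_1\geq R$ together with the quantity $R_1=\max\{d(x_0,s_k),d(x_0,t_k)\}_{k\leq n}$. For arbitrary bounded sets there is no canonical ordering, so I would restore the needed monotonicity by fixing the basepoint $x_0$ and noting that every bounded set $B$ is contained in some ball $\B(x_0,R)$ and conversely every ball is bounded; this sandwiching lets me translate freely between ``outside every bounded set'' and ``outside every ball,'' reducing each unbased condition to its based counterpart. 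Once this translation is set up, the remaining steps are routine and I would simply cite Propositions~\ref{LevelChains} and~\ref{OnePoint} together with the Lemma rather than reproving them.
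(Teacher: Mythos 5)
The paper itself gives no proof of Proposition~\ref{Unbased} (it is explicitly ``stated without proof,'' deferring to the based analogs), and your proposal fills it in exactly the way the authors intend: a cycle of implications in which $(2)\Rightarrow(3)\Rightarrow(4)$ are trivial weakenings, the bounded-set quantifier is traded for the ball quantifier via the sandwich ``every bounded set lies in some $\B(x_0,R)$ and every ball is bounded,'' and the based Lemma preceding Proposition~\ref{OnePoint} does the real work in $(4)\Rightarrow(1)$. Your observation that the Lemma's proof never uses that $s$ and $t$ start at $x_0$ --- only that they go to infinity and that $x_0$ centers the exhausting balls --- is correct, so it applies verbatim here.

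One organizational point should be fixed, though the needed idea already appears in your write-up. The tail-splicing device (enlarge $K$ past the sequence constants of $s$ and $t$, then prepend/append segments of $s$ and $t$ themselves) is \emph{not} needed for $(1)\Rightarrow(2)$: if $s_n$ and $t_n$ both lie in the same $K$-chain component of $X-B$, they are by definition joined by a $K$-chain outside $B$, so that implication is immediate with the same $K$ and $N$. Where the device \emph{is} needed is the final step of $(4)\Rightarrow(1)$, which you state in one clause (``the two tails sit in that single component'') without justification: the Lemma gives, for each $n\geq N$, a $K$-chain outside $B$ from $s_n$ to $t_n$, but a priori different $n$ could land in different $K$-chain components of $X-B$. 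To close this, first enlarge $K$ so that $K\geq\max\{N_1,N_2\}$ where $s$ is an $N_1$-sequence and $t$ is an $N_2$-sequence (harmless, since a $K$-chain is a $K'$-chain for $K'\geq K$ and $(1)$ only asserts existence of some $K$); then note that each $s_n$ and $t_n$ with $n\geq N$ is an endpoint of a chain lying outside $B$, hence itself lies outside $B$, so consecutive terms $s_n,s_{n+1}$ form a two-term $K$-chain outside $B$. Thus the tail of $s$ is a single $K$-chain outside $B$, likewise for $t$, and together with any one cross-chain from $s_N$ to $t_N$ this places both tails in one $K$-chain component of $X-B$, which is $(1)$. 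With that paragraph inserted, your proof is complete and is essentially the proof the paper omits.
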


Let $\sigma(X)$ be the set of (unbased) sequential ends in $X$. Since a based end $\en(s)\in \sigma(X,x_0)$ is naturally an element of $\sigma(X)$ we have a function $\sigma(X,x_0)\to \sigma(X)$. The proof of the following proposition is left to the reader.

\begin{proposition}
The natural function $\sigma(X,x_0)\to \sigma(X)$ is a well defined bijection.
\end{proposition}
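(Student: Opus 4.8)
The plan is to show that the based relation of Definition~\ref{SequentialEnds} and the unbased relation (item (1) of Proposition~\ref{Unbased}) agree on coarse sequences that start at $x_0$, and then to produce, for every unbased end, a representative starting at $x_0$. The single observation driving both the well-definedness and the injectivity is that the balls $\B(x_0,R)$ are \emph{cofinal} in the family of bounded subsets of $X$ ordered by inclusion: each $\B(x_0,R)$ is bounded, and conversely any bounded $B$ satisfies $B\subseteq \B(x_0,R)$ for $R=\sup_{b\in B}\dist(x_0,b)<\infty$.

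For well-definedness I would argue that if $s$ and $t$ start at $x_0$ and converge to the same based end with constant $K$, then they converge to the same unbased end with the \emph{same} $K$. Given a bounded $B$, choose $R$ with $B\subseteq \B(x_0,R)$; since $X-\B(x_0,R)\subseteq X-B$, every $K$-chain lying outside $\B(x_0,R)$ also lies outside $B$, so the index $N$ furnished by the based condition at radius $R$ witnesses the unbased condition for $B$. Injectivity is the reverse reading of the same cofinality: if $s$ and $t$ start at $x_0$ and converge to the same unbased end with constant $K$, then applying the unbased condition to the bounded set $B=\B(x_0,R)$ yields exactly the based condition at radius $R$, again with the same $K$. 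Hence the two relations coincide on based sequences and the map is injective.

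Surjectivity is where the only genuine construction appears, and it is the step I would treat most carefully. Given an arbitrary coarse sequence $u$ representing an unbased end, I would prepend the basepoint, setting $s=(x_0,u_0,u_1,\ldots)$. By Proposition~\ref{CoarseSequences}, $u$ is an $N_0$-sequence going to infinity; then $s$ is an $N$-sequence for $N=\max\{N_0,\dist(x_0,u_0)\}$ -- the only new gap is the single finite jump $x_0\to u_0$ -- and $s$ still goes to infinity because its tail is a reindexing of $u$, so $s$ is a coarse sequence starting at $x_0$ and $\en(s)\in\sigma(X,x_0)$. To see that $s$ and $u$ converge to the same unbased end I would verify the weakest formulation, Proposition~\ref{Unbased}(\ref{UnbasedProp4}), with $K=N_0$: for any bounded $B$, pick $n$ large enough that $u_{n-1},u_n\notin B$ (possible since $u$ goes to infinity); then $s_n=u_{n-1}$ and $u_n$ are joined outside $B$ by the one-step chain $u_{n-1},u_n$. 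Thus $s$ maps to the class of $u$.

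The one point worth emphasizing is that no controlled connection between $x_0$ and $u_0$ is needed: a coarse sequence only has to be an $N$-sequence for \emph{some} $N$, so a single unbounded jump from the basepoint to $u_0$ is harmless, and the worry that $x_0$ and $u_0$ might lie in different coarse components evaporates because $\dist(x_0,u_0)$ is finite in any metric space. This is why surjectivity, the apparent obstacle, costs nothing beyond bookkeeping.
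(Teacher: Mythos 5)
Your proof is correct. The paper itself leaves this proposition to the reader, so there is no proof to compare against; the argument you give---cofinality of the balls $\B(x_0,R)$ among bounded sets, which makes the based and unbased relations coincide on sequences starting at $x_0$ (giving well-definedness and injectivity), plus prepending $x_0$ to an arbitrary coarse sequence and verifying Proposition \ref{Unbased}(\ref{UnbasedProp4}) with the one-step chain $u_{n-1},u_n$ (giving surjectivity)---is exactly the natural argument the authors intend, with the quantifiers ($K$ chosen before $B$) handled correctly throughout.
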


The unbased version is more convenient when stating functorial properties.  

\begin{proposition}
Any coarse map $f:X\rightarrow Y$ induces a well defined function $f^*: \sigma(X)\rightarrow \sigma(Y)$ where $f^*(\en(s))=\en(f(s))$.
\end{proposition}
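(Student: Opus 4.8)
The plan is to verify the two things required for $f^*$ to be a well-defined function: first, that $f(s)$ is a genuine coarse sequence in $Y$ whenever $s$ is a coarse sequence in $X$, so that $\en(f(s))$ is a legitimate element of $\sigma(Y)$; and second, that the assignment does not depend on the chosen representative, i.e.\ that $s$ and $t$ converging to the same end in $X$ forces $f(s)$ and $f(t)$ to converge to the same end in $Y$.

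For the first point, I would use Proposition~\ref{CoarseSequences} to pass between sequences and functions $\mathbb N\to X$. Viewing $s$ as a coarse function $s:\mathbb N\to X$, the sequence $f(s)$ is exactly the composition $f\circ s:\mathbb N\to Y$. Since the composition of two bornologous maps is bornologous and the composition of two proper maps is proper (the preimage of a bounded set $B$ under $f\circ s$ is $s^{-1}(f^{-1}(B))$, bounded by properness of $f$ and then of $s$), the composite $f\circ s$ is coarse. By Proposition~\ref{CoarseSequences} again, $f(s)$ is a coarse sequence, so $\en(f(s))$ is defined.

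For well-definedness, I would work with the one-point criterion~\ref{UnbasedProp4} of Proposition~\ref{Unbased}, which is equivalent to convergence to the same end and is the most convenient form both to assume and to conclude. Assume $s$ and $t$ converge to the same end in $X$, with witnessing constant $K>0$. Let $K'>0$ be the bornologous constant for $f$ associated to $N=K$, so that $\dist(x,y)\le K$ implies $\dist(f(x),f(y))\le K'$. I claim $K'$ witnesses that $f(s)$ and $f(t)$ converge to the same end. Given a bounded set $B'\subseteq Y$, set $B=f^{-1}(B')$, which is bounded because $f$ is proper. By condition~\ref{UnbasedProp4} for $s,t$ applied to $B$, there are indices $i,j$ and a $K$-chain $c_1=s_i,\ldots,c_k=t_j$ lying outside $B$. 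Applying $f$, the sequence $f(c_1)=f(s_i),\ldots,f(c_k)=f(t_j)$ is a $K'$-chain by the choice of $K'$, and it lies outside $B'$ because $c_m\notin f^{-1}(B')$ gives $f(c_m)\notin B'$. Thus $f(s_i)$ and $f(t_j)$ are joined by a $K'$-chain outside $B'$, verifying condition~\ref{UnbasedProp4} for $f(s),f(t)$; hence they converge to the same end.

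The step to get right, rather than a genuine obstacle, is the simultaneous use of both halves of the coarse hypothesis: properness is exactly what lets me pull the bounded set $B'$ in $Y$ back to a bounded set $B$ in $X$, while the bornologous bound is exactly what keeps the image of a $K$-chain a $K'$-chain. Neither property alone suffices, and matching them correctly---pulling $B'$ back before invoking the end condition in $X$---is the crux of the argument.
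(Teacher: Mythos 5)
Your proposal is correct and follows essentially the same argument as the paper: pull the bounded set in $Y$ back via properness of $f$, invoke the end condition in $X$ for that preimage, and push the resulting $K$-chain forward via the bornologous bound to get a chain outside the original bounded set. The only difference is cosmetic: you use formulation~(\ref{UnbasedProp4}) of Proposition~\ref{Unbased} (indices $i,j$) where the paper uses the same-index version, and you explicitly verify that $f(s)$ is a coarse sequence via composition of coarse maps, a point the paper leaves implicit.
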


\begin{proof}
Suppose $\en(s)=\en(t)$ in $\sigma(X)$. We know there is $K>0$ such that for all bounded $B\subset X$ there is an $n\in\mathbb N$ so that $s_n$ and $t_n$ can be joined by a $K$-chain outside $B$. Since $f$ is bornologous, there is $M>0$ such that $K$-chains in $X$ are sent to $M$-chains in $Y$. Suppose $B\subset Y$ is bounded. To show $\en(f(s))=\en(f(t))$ in $\sigma(Y)$, we find $n\in\mathbb N$ such that $f(s_n)$ is joined to $f(t_n)$ by an $M$-chain outside $B$. We know $f$ is proper, so $f^{-1}(B)$ is bounded. We know there is an $n\in\mathbb N$ such that $s_n$ and $t_n$ can be joined by a $K$-chain outside $f^{-1}(B)$. But then $f(s_n)$ is joined to $f(t_n)$ by an $M$-chain outside $B$.
\end{proof}

Since the identity $X\to X$ induces the identity $\sigma(X)\to\sigma(X)$ and if $f:X\to Y$ and $g:Y\to Z$ are coarse functions then $(g\circ f)^*\equiv g^*\circ f^*$ we have a functor from the category of coarse metric spaces to the category of sets. 

We now have a nice proof that the cardinality of the sequential ends is invariant under coarse equivalence that is independent of the invariant in \cite{direct}.

\begin{theorem}
Suppose $X$ and $Y$ are coarsely equivalent. Then there is a bijection between $\sigma(X)$ and $\sigma(Y)$.
\end{theorem}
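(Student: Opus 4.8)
The plan is to leverage the functorial structure established in the preceding propositions rather than reprove anything from scratch. Since $X$ and $Y$ are coarsely equivalent, there exist coarse maps $f\colon X\to Y$ and $g\colon Y\to X$ with $g\circ f$ close to $\mathrm{id}_X$ and $f\circ g$ close to $\mathrm{id}_Y$. By the two immediately preceding results, each coarse map induces a function on sequential ends, composition is respected ($(g\circ f)^*\equiv g^*\circ f^*$), and the identity induces the identity. So I would first obtain the induced maps $f^*\colon\sigma(X)\to\sigma(Y)$ and $g^*\colon\sigma(Y)\to\sigma(X)$.

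The core of the argument is then to show that $g^*\circ f^*$ and $f^*\circ g^*$ are the respective identity functions, which by functoriality reduces to showing $(g\circ f)^*=\mathrm{id}_{\sigma(X)}^*$ and $(f\circ g)^*=\mathrm{id}_{\sigma(Y)}^*$. For this the one missing ingredient is that close maps induce the same function on ends: if $h$ and $h'$ are close coarse maps, then $h^*=(h')^*$. I would establish this as a short lemma (or fold it inline). The point is that if $h$ and $h'$ are close with constant $C$, then for any coarse sequence $s$, the sequences $h(s)$ and $h'(s)$ satisfy $d\bigl(h(s_n),h'(s_n)\bigr)\leq C$ for every $n$; hence $h(s_n)$ and $h'(s_n)$ are joined by a trivial $C$-chain (of length two) outside any bounded set whenever $h(s_n)$ lies outside it, which gives $\en(h(s))=\en(h'(s))$ via the one-point criterion of Proposition \ref{Unbased}\eqref{UnbasedProp4}. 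In particular $(g\circ f)^*=(\mathrm{id}_X)^*=\mathrm{id}_{\sigma(X)}$ since $g\circ f$ is close to $\mathrm{id}_X$, and symmetrically for $f\circ g$.

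Putting this together, $g^*\circ f^*=(g\circ f)^*=\mathrm{id}_{\sigma(X)}$ and $f^*\circ g^*=(f\circ g)^*=\mathrm{id}_{\sigma(Y)}$, so $f^*$ and $g^*$ are mutually inverse bijections between $\sigma(X)$ and $\sigma(Y)$. I expect the main obstacle to be precisely the close-invariance step: one must verify carefully that for a bounded set $B$ in the target, a radius-$C$ perturbation still leaves both endpoints of the chain outside $B$ (or, more robustly, enlarge $B$ by the constant $C$ using properness so that the perturbed points remain outside the original $B$), so that the $K$-chain witnessing $\en(h(s))=\en(h(t))$ can be transported to a chain of comparable constant for $h'$. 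Everything else is a formal consequence of the functor axioms already proved.
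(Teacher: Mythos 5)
Your proposal is correct and takes essentially the same approach as the paper: both obtain the induced maps $f^*$ and $g^*$, invoke functoriality, and conclude the composites are identities because $g\circ f$ and $f\circ g$ are close to the respective identity maps. The only difference is level of detail --- the paper simply asserts $\en((g\circ f)(s))=\en(s)$ from closeness, while you explicitly prove the close-invariance step (via the two-point $C$-chain and the one-point criterion), which is a step the paper leaves implicit.
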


\begin{proof}
Suppose $f:X\to Y$ and $g:Y\to X$ comprise the coarse equivalence. We show that $f_*:\sigma(X)\to \sigma(Y)$ and $g_*:\sigma(Y)\to \sigma(X)$ compose to form the respective identities. Suppose $\en(s)\in\sigma(X)$. Since $g\circ f$ is close to the identity on $X$, $g_*\circ f_*(\en(s))=\en(f\circ g(s))=\en(s)$. The other composition is symmetric.
\end{proof}

For proper geodesic metric spaces the sequential ends and classical ends are equivalent.

\begin{proposition}\label{Ends}
Suppose $X$ is a proper geodesic space. Then $\Ends(X)$ and $\sigma(X)$ are equivalent as sets.
\end{proposition}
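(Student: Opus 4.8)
The plan is to construct a single map $\Psi\colon\sigma(X)\to\Ends(X)$ and prove it is a well-defined bijection, translating back and forth between the coarse ($K$-chain) picture and the topological (path) picture using the two features that $X$ is proper and geodesic. Fix a basepoint $x_0$. Since $X$ is proper, the closed balls $\overline{\B}(x_0,R)$ are compact and cofinal among compact sets, so in the definition of $\Ends(X)$ I may use only these, and by Proposition \ref{CoarseSequences} a ray is topologically proper exactly when its underlying sequence goes to infinity. The two bridging devices are: (i) \emph{thickening}, where a $K$-chain lying outside $\overline{\B}(x_0,R)$ is filled in by geodesic segments, each of length at most $K$, to give a continuous path lying outside $\overline{\B}(x_0,R-K)$; and (ii) \emph{sampling}, where a path in the complement of a closed set $C$, being uniformly continuous on its compact domain, is subdivided into a $1$-chain with the same endpoints that still avoids $C$. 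These convert the chain conditions of Proposition \ref{Unbased} into path-component conditions and back.

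To define $\Psi$, send a coarse sequence $s=(s_n)$, an $N$-sequence going to infinity, to the ray $r_s$ obtained by concatenating geodesic segments $[s_n,s_{n+1}]$, reparametrized so that $r_s(n)=s_n$. Each segment has length at most $N$, and $\dist(x_0,s_n)\to\infty$, so any point of $r_s$ at parameter in $[m,m+1]$ lies within $N$ of $s_m$; hence $\dist(x_0,r_s(t))\to\infty$ and $r_s$ is proper. Set $\Psi(\en(s))=\en(r_s)$. For well-definedness I would use Proposition \ref{Unbased}(2): given compact $C\subseteq\overline{\B}(x_0,R_0)$, apply the chain condition with $B=\overline{\B}(x_0,R_0+K)$ to get the threshold $N$, thicken the $K$-chains joining $s_n$ to $t_n$ (for $n\ge N$) into paths outside $C$, and note that for large $M\ge N$ the tails $r_s[M,\infty)$ and $r_t[M,\infty)$ are path-connected subsets of $X\setminus C$ joined by one such path, so $r_s\sim r_t$. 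Injectivity runs the other device: from $r_s\sim r_t$, for a bounded $B\subseteq\B(x_0,R_0)$ take $C=\overline{\B}(x_0,R_0)$, choose integers $a,b$ with $r_s(a)=s_a$ and $r_t(b)=t_b$ in the common path component of $X\setminus C$, and sample the connecting path into a $1$-chain outside $B$; Proposition \ref{Unbased}(4) then yields $\en(s)=\en(t)$.

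The main obstacle I expect is surjectivity, because the naive idea of sampling a proper ray $r$ at integer times fails: a proper ray may accelerate (for instance $t\mapsto t^2$ in $\mathbb R$), so $(r(n))$ need not be an $N$-sequence for any fixed $N$. The remedy is to sample $r$ finely rather than at integers. On each block $[n,n+1]$, uniform continuity lets me insert finitely many points of $r$ forming a $1$-chain from $r(n)$ to $r(n+1)$; concatenating over all $n$ produces a genuine coarse sequence $s$ that is a $1$-sequence. It goes to infinity because $\inf_{t\ge n}\dist(x_0,r(t))\to\infty$ by properness, which drives the block-$n$ sample points out to infinity. It then remains to verify $r_s\sim r$: for a compact set $C$ and $n$ large, both $r([n,n+1])$ and the geodesic segments of $r_s$ over block $n$ avoid $C$, and they share the point $r(n)$, so the tails of $r$ and of $r_s$ lie in a common path component of $X\setminus C$. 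Hence $\Psi(\en(s))=\en(r_s)=\en(r)$, proving $\Psi$ is onto.

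With well-definedness, injectivity, and surjectivity in hand, $\Psi$ is the desired bijection. The genuinely delicate points are not conceptual but bookkeeping: the radius tracking ($R$ versus $R-K$ versus $R_0$) in the thickening step, the insistence on integer parameters so that sampled endpoints are literally terms of the sequences, and the verification that the fine sampling of a general proper ray escapes every bounded set. These are where essentially all the care of the argument is concentrated.
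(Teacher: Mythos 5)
Your proposal is correct and is essentially the paper's own argument: both rest on the same two constructions, concatenating geodesic segments to turn a coarse sequence into a proper ray, and finely sampling a proper ray via uniform continuity on each block $[n,n+1]$ to produce a $1$-sequence, with properness of $X$ transferring properness between the two pictures (your surjectivity argument is exactly the paper's proof that the composition $\Ends(X)\to\sigma(X)\to\Ends(X)$ is the identity). The only real difference is packaging: the paper builds maps in both directions and checks that the two compositions are identities, delegating well-definedness to \cite[Lemma I.8.28]{BH}, whereas you prove a single map is a well-defined bijection and make the chain-to-path (thickening) and path-to-chain (sampling) conversions explicit in place of that citation.
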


\begin{proof}
We wish to define a function $\Ends(X)\to\sigma(X)$. Suppose $r$ is a proper ray in $X$. Since $r$ restricted to $[0,1]$ is uniformly continuous there is a finite list $0=t_0<t_1<\cdots<t_n=1$ of real numbers such that for all $i<n$, $\dist(r(t_i),r(t_{i+1}))\leq 1$. We define the first $n+1$ terms of our desired sequence to be $r(t_i)$, $0\leq i\leq n$. Then we repeat the procedure for $r$ restricted to $[1,2]$ to find the a list of real numbers $1=s_0<s_1<\cdots <s_m=2$ with $\dist(r(s_i),r(s_{i+1}))\leq 1$ and define the next $m$ terms of our sequence to be $r(s_i)$, $0<i\leq m$. By induction we have a $1$-sequence in $X$. It is a coarse sequence since a bounded set in $X$ is contained in a compact set so its inverse image in $[0,\infty)$ is compact and therefore bounded and thus corresponds to a finite number of terms of our sequence. In view of \cite[Lemma I.8.28]{BH} this function $\Ends(X)\to\sigma(X)$ is well defined. 

We now define a function $\sigma(X)\to\Ends(X)$. Suppose $s$ is a coarse sequence. For each $i\in\mathbb N$ choose a geodesic from $s_i$ to $s_{i+1}$. Concatenate these geodesics to form a ray $r:[0,\infty)\to X$ such that $r(n)=s_{n}$ for all $n\in\mathbb N$. It is proper since a compact set of $X$ is contained in a bounded set so the inverse image of $s$ contains finitely many terms and thus corresponds to a compact subset of $[0,\infty)$. Again in view of \cite[Lemma I.8.28]{BH} this function is well defined.

Consider the composition $\Ends(X)\to\sigma(X)\to\Ends(X)$. Given a proper ray $r$ in $X$, the ray assigned to $r$ is a ray $r'$ such that for each $n\in\mathbb N$ there is $m\geq n$ with $r'(m)=r(n)$. Given a compact set $C\subset X$ there is $N\in\mathbb N$ such that $r[N,\infty)$ and $r'[N,\infty)$ lie outside $C$. Then we may choose $m>n\geq N$ with $r'(m)=r(n)$ so $r[N,\infty)$ and $r'[N,\infty)$ lie in the same path component of $X-C$.

Now consider the composition $\sigma(X)\to\Ends(X)\to\sigma(X)$. Given a coarse sequence $s$ in $X$, the sequence assigned to $s$ is a sequence $s'$ such that for each $n\in\mathbb N$ there is $m\geq n$ such that $s'_m=s_n$.  given a bounded set $B\subset X$ there is $N\in\mathbb N$ so that $\{s_i\}_{i\geq N}$ and $\{s'_i\}_{i\geq N}$ lie outside $B$. Then we may choose $m>n\geq N$ with $s'_m=s_n$ so $\en(s')=\en(s)$ by Proposition \ref{Unbased}(\ref{UnbasedProp4}). Note we can see that $s'$ is a subsequence of $s$ so they converge to the same end by Theorem \ref{MainTheorem}.

\end{proof}

\section{Equivalence between the sequential ends and $\sigma(X,x_0)$}

Before proving that the invariant from \cite{direct} is equivalent to the sequential ends of a space we offer a simplified definition of the invariant from \cite{direct}. We show that if $s\approx t$ then there is a single sequence that contains both $s$ and $t$ as subsequences.

\begin{lemma}\label{ReplacingSequences}
Suppose $s$, $t$, and $r$ are $N$-sequences in $X$ going to infinity based at $x_0$. Suppose that $t$ is a subsequence of $s$ and $r$ is a supersequence of $t$. Then there is an $N$-sequence $r'$ based at $x_0$ going to infinity that is a supersequence of both $r$ and $s$.
\end{lemma}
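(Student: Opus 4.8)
The plan is to use the common subsequence $t$ as a skeleton and to splice in the intermediate terms of both $s$ and $r$ between consecutive skeleton points. Since $t$ is a subsequence of $s$, there are indices $0=a_0<a_1<\cdots$ with $t_k=s_{a_k}$; since $t$ is a subsequence of $r$, there are indices $0=b_0<b_1<\cdots$ with $t_k=r_{b_k}$. Both are strictly increasing sequences of integers, so $a_k\to\infty$ and $b_k\to\infty$. I will build $r'$ block by block, the $k$-th block running from $t_k$ to $t_{k+1}$ and carrying the terms of $s$ and of $r$ that lie strictly between these two skeleton points.

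For the $k$-th block I traverse the relevant piece of $r$ forward and then back, and then traverse the relevant piece of $s$ forward:
\[
t_k,\,r_{b_k+1},\ldots,r_{b_{k+1}-1},\,t_{k+1},\,r_{b_{k+1}-1},\ldots,r_{b_k+1},\,t_k,\,s_{a_k+1},\ldots,s_{a_{k+1}-1},\,t_{k+1}.
\]
Concatenating the blocks for $k=0,1,2,\ldots$ (each block ends at $t_{k+1}$, exactly where the next begins) defines $r'$, which starts at $t_0=x_0$. Traversing $r$ forward first is the crucial choice: it makes the terms $r_{b_k},r_{b_k+1},\ldots,r_{b_{k+1}}$ occur in increasing order inside the block, and selecting these forward runs across all blocks exhibits $r$ as a subsequence of $r'$; similarly $s_{a_k},\ldots,s_{a_{k+1}}$ occurs in order, so $s$ is a subsequence as well. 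Every adjacent pair in a block is a pair of consecutive terms of $s$ or of $r$ (the turn-arounds at $t_{k+1}$ and the returns to $t_k$ included, using $t_k=s_{a_k}=r_{b_k}$ and $t_{k+1}=s_{a_{k+1}}=r_{b_{k+1}}$), and consecutive blocks meet at the repeated point $t_{k+1}$; hence $r'$ is an $N$-sequence. Degenerate blocks, where $a_{k+1}=a_k+1$ or $b_{k+1}=b_k+1$, cause no trouble for the same reason.

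The step I expect to be the main obstacle is verifying that $r'$ still goes to infinity, since each block deliberately backtracks to $t_k$. The point is that the backtracking is bounded by the skeleton, which itself escapes: every $s$-term in block $k$ has index at least $a_k$ and every $r$-term has index at least $b_k$. Given $R>0$, properness of $s$ and $r$ (Proposition \ref{CoarseSequences}) furnishes a threshold index beyond which their terms lie outside $\B(x_0,R)$; since $a_k\to\infty$ and $b_k\to\infty$, there is a block index $K$ past which every term of every block lies outside $\B(x_0,R)$. Thus $\lim_{i\to\infty}\dist(x_0,r'_i)=\infty$, so $r'$ is a coarse sequence (again by Proposition \ref{CoarseSequences}) based at $x_0$ that is a supersequence of both $r$ and $s$.
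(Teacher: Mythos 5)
Your proof is correct and takes essentially the same approach as the paper: both use $t$ as a skeleton and, within each block between consecutive $t$-terms, traverse the intervening terms of one of $s,r$ forward, return to the start of the block, and then traverse the intervening terms of the other forward, so that both $s$ and $r$ occur in order as subsequences. The only cosmetic differences are that you return to $t_k$ by retracing the $r$-terms while the paper jumps from $t_{k+1}$ back to $t_k$ in a single step (valid since $d(t_k,t_{k+1})\leq N$), and that your verification that $r'$ goes to infinity is spelled out where the paper merely asserts it.
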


\begin{proof}
We start at the basepoint $x_0$. Suppose the terms $s_1,\ldots,s_n$ of $s$ are removed between $t_0=x_0$ and $t_1$ when passing from $s$ to $t$. Also suppose that the terms $r_1,\ldots,r_m$ are added between the terms $t_0$ and $t_1$ when passing from $t$ to $r$. We define the first several terms of $r'$ to be $t_0,s_1,\ldots,s_n,t_1,t_0,r_1,\ldots,r_m,t_1$. Note if no terms $s_1,\ldots,s_n$ are removed then $r'$ will go right from $t_0$ to $t_1$. Likewise if no terms $r_1,\ldots,r_m$ are added then there is no need to travel back to $t_0$; we simply end at $t_1$.

We repeat the same procedure starting at $t_1$. By induction we can define the $N$-sequence $r'$ that is a supersequence of both $s$ and $r$. Notice that $r'$ goes to infinity since $s$, $t$, and $r$ do.
\end{proof}

\begin{proposition}\label{OneSupersequence}
If $s$ and $t$ are equivalent $N$-sequences in $X$ based at $x_0$ that go to infinity then there is an $N$-sequence $r$ in $X$ based at $x_0$ that goes to infinity that is a supersequence of $s$ and $t$.
\end{proposition}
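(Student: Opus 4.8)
The plan is to unwind the definition of $\approx$ and reduce the whole proposition to a single induction whose inductive step is exactly one application of Lemma \ref{ReplacingSequences}. Since $s\approx t$, the symmetric transitive closure gives a finite list of coarse sequences $s=s_1,s_2,\ldots,s_m=t$ based at $x_0$ such that for each $i$ either $s_i$ is a subsequence of $s_{i+1}$ or $s_{i+1}$ is a subsequence of $s_i$. Because the list is finite I would first fix a single $N>0$ large enough that every $s_i$ is an $N$-sequence, as noted after Definition \ref{SigmaDef}; each $s_i$ also goes to infinity, being a coarse sequence. It then suffices to produce one $N$-sequence based at $x_0$ going to infinity that is a supersequence of all the $s_i$, since in particular it will then be a supersequence of $s=s_1$ and $t=s_m$.

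I would prove this stronger claim by induction on $i$: there is an $N$-sequence $r^{(i)}$ based at $x_0$, going to infinity, that is a supersequence of each of $s_1,\ldots,s_i$. The base case is $r^{(1)}=s_1$. For the inductive step, suppose $r^{(i)}$ has been built, so in particular it is a supersequence of $s_i$, and examine the link between $s_i$ and $s_{i+1}$. If $s_{i+1}$ is a subsequence of $s_i$, then since the subsequence relation is transitive $r^{(i)}$ is already a supersequence of $s_{i+1}$, and I set $r^{(i+1)}=r^{(i)}$. If instead $s_i$ is a subsequence of $s_{i+1}$, then $s_i$ is a common subsequence of the two $N$-sequences $r^{(i)}$ and $s_{i+1}$, which is precisely the hypothesis of Lemma \ref{ReplacingSequences} (applied with $t=s_i$, $s=r^{(i)}$, and $r=s_{i+1}$); it yields an $N$-sequence $r^{(i+1)}$ based at $x_0$ going to infinity that is a supersequence of both $r^{(i)}$ and $s_{i+1}$. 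By transitivity $r^{(i+1)}$ is then a supersequence of each of $s_1,\ldots,s_i$ as well, completing the step, and $r=r^{(m)}$ is the desired sequence.

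The entire content of the argument lives in the second, \emph{upward}, case, and that is where I expect the only real obstacle to be: Lemma \ref{ReplacingSequences} merges a \emph{valley} (a sequence sitting below two others as a common subsequence) but says nothing directly about a \emph{peak}, whereas a general chain realizing $s\approx t$ may alternate up and down arbitrarily. The device that dissolves this is to carry along a running supersequence $r^{(i)}$ of the entire processed prefix: a downward link costs nothing by transitivity, while an upward link $s_i\subseteq s_{i+1}$ automatically exhibits $s_i$ as a common subsequence of $r^{(i)}$ and $s_{i+1}$, manufacturing exactly the valley the lemma consumes. The remaining points are bookkeeping ones—that the uniform $N$ can be chosen once at the start and is preserved at each step, and that every constructed sequence still goes to infinity—both of which are delivered directly by the conclusion of Lemma \ref{ReplacingSequences}.
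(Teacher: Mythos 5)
Your proof is correct and is essentially the paper's own argument: both iterate Lemma \ref{ReplacingSequences} along the chain realizing the equivalence while maintaining a sequence that is a supersequence of everything processed so far, the paper phrasing this via an alternating chain and a parity case analysis, you via a cleaner two-case induction (downward links absorbed by transitivity, upward links consumed by the lemma). The only discrepancy is the constant: the paper takes the realizing chain to consist of $N$-sequences and so produces an $N$-sequence, whereas you unpack $\approx$ as a chain of coarse sequences and enlarge the constant once at the start, so your common supersequence is a $K$-sequence for a possibly larger $K$ --- slightly weaker than the literal statement, but exactly the form used in the proof of Theorem \ref{MainTheorem}.
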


\begin{proof}
Let $s=s_1,s_2,\ldots,s_n=t$ be $N$-sequences $s_i$ in $X$ based at $x_0$ that go to infinity that realize the equivalence between $s$ and $t$. We may assume that any time $s_i$ is a subsequence of $s_{i-1}$ that $s_{i+1}$ is a supersequence of $s_i$ and vice versa. Suppose $s_2$ is a subsequence of $s_1$. Then we replace $s_3$ by a sequence $s_3'$ that is a supersequence of both $s_1$ and $s_3$ according to Lemma \ref{ReplacingSequences}. Repeating this process we arrive at our conclusion--if $n$ is odd then $s_n'$ is a supersequence of both $s_n$ and $s_1$ and if $n$ is even then $s_{n-1}'$ is a supersequence of both $s_n$ and $s_1$.

Now suppose $s_2$ is a supersequence of $s_1$. We replace $s_4$ by a sequence $s_4'$ that is a supersequence of $s_2$ (and therefore $s_1$) and $s_4$. Repeating this process we again arrive at our conclusion. If $n$ is even then $s_n'$ is our desired sequence and if $n$ is odd $s_{n-1}'$ is our desired sequence.
\end{proof}

We can now prove the main theorem.

\begin{theorem}\label{MainTheorem}
Let $s$ and $t$ be coarse sequences. Then $s\approx t$ if and only if $\en(s)=\en(t)$.
\end{theorem}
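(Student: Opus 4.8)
The plan is to prove the two directions separately. For the forward direction, suppose $s \approx t$. By definition of $\approx$ as the symmetric transitive closure of ``subsequence,'' and since any finite chain of sequences realizing the equivalence can be taken to be $K$-sequences for a common $K$, we may invoke Proposition \ref{OneSupersequence}: there is a single $K$-sequence $r$ based at $x_0$ going to infinity that is a supersequence of both $s$ and $t$. This reduces the forward direction to the single claim that a coarse sequence and any of its subsequences converge to the same end. That claim should be nearly immediate from the one-point formulation in Proposition \ref{OnePoint}: if $r$ is a supersequence of $s$, then for every $R > 0$ we can find an index $i$ with $s_i = r_{k}$ lying outside $\B(x_0, R)$ (possible since $s$ goes to infinity), and the same term $r_k$ of course equals some term $t_j$ outside $\B(x_0,R)$ if $r$ is also a supersequence of $t$; in fact $s_i$ and $t_j$ can be joined by the trivial $K$-chain when they coincide, or more generally by following the connecting portion of $r$, which is itself a $K$-chain lying eventually outside any bounded ball. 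Applying transitivity of the ``same end'' relation then gives $\en(s) = \en(t)$.

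For the converse, suppose $\en(s) = \en(t)$. Here I would argue directly that one can build a supersequence witnessing $s \approx t$, or equivalently produce a common sequence $u$ of which both $s$ and $t$ are (up to the equivalence $\approx$) pieces. The idea is to exploit the one-point criterion of Proposition \ref{OnePoint}: there is a fixed $K$ such that for all $R > 0$ some $s_i$ and $t_j$ are joined by a $K$-chain outside $\B(x_0,R)$. Choosing an increasing sequence of radii $R_1 < R_2 < \cdots \to \infty$, I would select corresponding indices and connecting $K$-chains, then interleave initial segments of $s$, these connecting chains, and segments of $t$ to assemble a single $K$-sequence $u$ based at $x_0$ that goes to infinity. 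The construction must ensure that $u$ contains $s$ (or a cofinal subsequence of it) and $t$ as subsequences, so that $s \approx u \approx t$ via the ``subsequence'' relation and its transitive closure.

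The main obstacle is this converse construction: unlike the forward direction, having the ends equal only guarantees the existence of connecting chains, not that $s$ and $t$ literally share terms, so one cannot simply concatenate. The delicate points are (i) arranging the interleaving so that both $s$ and $t$ appear as genuine subsequences rather than merely being ``close,'' and (ii) controlling the bookkeeping of indices so that the resulting sequence still goes to infinity and remains a $K$-sequence (the connecting chains stay outside ever-larger balls, but the back-and-forth travel along $s$, the chain, and $t$ must not create a bounded subsequence). I expect to handle this much as in the proof of Lemma \ref{ReplacingSequences}: travel out along $s$ to an index $i$ outside $\B(x_0,R_m)$, traverse the connecting $K$-chain to the matching term $t_j$, then travel back along $t$ as needed, repeating with $R_{m+1}$, and verifying that since $s$, $t$, and all the chains escape every bounded ball the assembled sequence $u$ does as well. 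Once $u$ is built, concluding $s \approx t$ is formal.
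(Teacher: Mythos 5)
Your proposal follows essentially the same route as the paper's own proof: the forward direction uses Proposition \ref{OneSupersequence} to obtain a common $K$-supersequence $r$ and then joins far-out terms of $s$ and $t$ by the $K$-chain running along a tail of $r$ (verified via Proposition \ref{OnePoint}), and the converse builds a common supersequence by the same back-and-forth interleaving of $s$, $t$, and an escalating family of connecting chains, exactly the construction the paper carries out. The only cosmetic difference is that the paper draws its connecting chains from the formulation in Proposition \ref{LevelChains} (joining $s_n$ to $t_n$ at a common index $n$) rather than from Proposition \ref{OnePoint}, which slightly eases the index bookkeeping you flagged as the delicate point.
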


\begin{proof}
$(\Rightarrow)$ We have a $K$-sequence $r$ containing $s$ and $t$ as subsequences. Suppose $R>0$. Then there exists $N\in\mathbb N$ so that $d(r_n,x_0)\geq R$ for all $n\geq N$. There is $i\geq N$ so that $r_i=s_{k_1}$ and there is $j\geq N$ with $r_j=t_{k_2}$ for some $k_1,k_2\in\mathbb N$. Then $s_{k_1}$ and $t_{k_2}$ are joined by the $K$-chain that runs along $r$ from $r_i$ to $r_j$.

$(\Leftarrow)$ For all $i \in \mathbb N$ there is $N_i>0$ so that for all $n\geq N_i$ there is a $K$-chain $c_i$ from $s_n$ to $t_n$ outside $\B(x_0,i)$. We can assume $N_{i+1}>N_i$. We create a $K$-sequence $r$ containing $q$ and $t$ as subsequences inductively. We start at $x_0$ and follow $t$ to $t_{N_1}$ and back to $x_0$, then follow $s$ to $s_{N_1}$. Then at the $i$th stage we start at $s_{N_i}$ and follow $c_i$ to $t_{N_i}$. Then we travel along $t$ to $t_{N_{i+1}}$ and back to $t_{N_i}$ then back along $c_i$ to $s_{N_i}$ and then along $s$ to $s_{N_{i+1}}$. Since $s$ and $t$ go to infinity and $c_i$ lies outside $\B(x_0,i)$ we see that $r$ must go to infinity.
\end{proof}

\section{Examples}\label{Examples}

\begin{example}\label{sigmaR}
The real line $\mathbb R$ was explored in \cite[Theorem 3.6]{borno}. We revisit the example. We show that $\sigma(\mathbb R,0)=\{\en((n)),\en((-n))\}$. Certainly $\en((n)),\en((-n))\in \sigma(\mathbb R,0)$. First we show that $\en((n))\neq \en((-n))$. Suppose $K>0$. Set $R=K$. Then for all $i,j\in\mathbb N$, $i$ and $-j$ cannot be joined by a $K$-chain outside $\B(0,R)$.

Now we show that $\sigma(\mathbb R,0)\subset \{\en((n)),\en((-n))\}$. Suppose $s$ is an coarse sequence in $\mathbb R$ starting at $0$. There is an $M\in\mathbb N$ so that $\dist(0,s_i)>N$ for all $i\geq M$.

There are two cases: $s_M>0$ and $s_M<0$. We treat the first case; the second case is symmetric to the the first. We will show that $\en(s)=\en((n))$. Since $s_M>0$ and $d(s_i,0)>N$ for all $i\geq M$, $s_i>0$ for all $i\geq M$. Given $R>0$ there is $T>0$ so that for all $i\geq T$, $d(0,s_i)>R$. We can assume $T\geq M$. Let $n$ be an integer greater than $R$. Then $s_T$ and $n$ can be joined by a 1-chain outside $\B(0,R)$. Thus $\en(s)=\en((n))$.

\end{example}

\begin{example}
We show that $|\sigma (\mathbb R^n, \bm 0)|=1$ for $n\geq 2$. We could use Proposition \ref{Ends} and the fact that $\mathbb R^n-\B(\bm 0,R)$ is path connected. Instead we explicitly show any coarse sequence $s$ in $\mathbb R^n$ that starts at $\bm 0$ converges to the same end as the sequence $((m,0,\ldots,0))$. The proof illustrates the use of the extra dimension to form $K$-chains.

We use the max metric. Given $R>0$ there is an $k\in\mathbb N$ such that $\dist(s_k,\bm 0)>R$. Choose an integer $m>R$. We construct a 1-chain in $\mathbb R^n$ from $s_k$ to $(m,0,\ldots 0)$. Write $s_k=(x_1,\ldots,x_n)$. Then there is a $j\leq n$ with $|x_j|>R$. There are two cases: $j>1$ and $j=1$.

\textit{Case 1:} $j>1$. Choose a 1-chain in $\mathbb R$ from $x_1$ to $m$ and define a 1-chain in $\mathbb R^n$ to start at $s_k$ and follow that chain in the first coordinate to the point $(m,x_2,\ldots,x_n)$. Since $j>1$ and $|x_j|>R$ each term of this chain stays outside of $\B(\bm 0,R)$. 

For each $i>0$ choose a 1-chain in $\mathbb R$ from $x_i$ to $0$ and follow it in the $i$th coordinate from the point $(m,0,\ldots,x_i,x_{i+1},\ldots,x_n)$ to the point $(m,0,\ldots,0,x_{i+1},\ldots,x_n)$. This 1-chain stays outside of $\B(\bm 0,R)$ since $m>R$.

We concatenate these chains to form a 1-chain from $s_k$ to $(m,0,\ldots 0)$.

\textit{Case 2:} $i=1$. We must be careful to choose a 1-chain from $s_k$ to $(m,x_2,\ldots,x_n)$ that stays outside $\B(\bm 0, R)$. We do so by choosing a 1-chain in $\mathbb R$ from $x_2$ to $m$ and following it from $s_k$ to $(x_1,m,x_3,\ldots,x_n)$. Then we follow a 1-chain in $\mathbb R$ from $x_1$ to $m$ to go from $(x_1,m,x_3,\ldots,x_n)$ to $(m,m,x_3,\ldots,x_n)$ and then finally back to $(m,x_2,\ldots,x_n)$. The rest of the construction proceeds as in Case 1.
 
\end{example}

The previous examples were geodesic so the sequential ends correspond to the classical ends. We now give several examples of spaces that are not geodesic.

\begin{example}
Let $X=\{(-1,y):y\geq 1\}\cup \{(x,1):-1\leq x\leq 1\}\cup \{(1,y):y\geq 1\}\subset \mathbb R^2$, an example from \cite{borno} and \cite{coarse}. We can easily see that $\sigma(X)=1$ while $\Ends(X)=2$.
\end{example}

\begin{example}
Let $X$ be the space consisting of tangent circles of radius $r=2^n$ with center $(0,3\cdot2^n)$ for $n=0,1,2,\ldots$. We show $|\sigma(X,(0,2))|=1$. Let $s$ be a $1$-sequence in $X$ that starts at $(0,2)$ and travels along the right of every tangent circle and then up to the next point of tangency. We show that any coarse sequence $t$ is equivalent to $s$. Suppose $t$ is a coarse sequence in $X$. Given $R>0$ we can find $t_i$ such that $t_i$ lies on a circle that lies outside $\B((0,2),R)$. Since $t_i$ lies on one of the circles and we know there is a point on $s$, say $s_j$ on that circle, $t_i$ and $s_j$ can be joined by a $1$-chain on that circle. Thus $\en(t)=\en(s)$. 
\end{example}

\begin{example}
Let $X=\{(x,2^n): x\in \mathbb R, n\in \mathbb N\} \cup \{(0,y): y\geq 1\}$. We show $|\sigma(X, (0,1))|=3$. Suppose $t$ is a coarse sequence in $X$ starting at $(0,0)$ and set $t_i=(x_i,y_i)$. There are two cases, $(y_i)$ is either bounded or unbounded. 

First suppose $(y_i)$ is unbounded. We show $\en(t)=\en((0,n))$. Suppose $R>0$. Choose $i$ such that $y_i>R$ and choose $j>R$. Then $t_i$ and $(0,j)$ can be joined by a 1-chain outside $\B(x_0,R)$. 
 
Now suppose $(y_i)$ is bounded by $M$. Now $(y_i)$ is an $N$-sequence for some $N>0$; we can assume $M>N$. Choose $S\in\mathbb N$ so $d(t_i,(0,0))>M$ for all $i\geq S$. We have two more cases, $x_S>0$ and $x_S<0$. We treat $x_S>0$, the other is symmetric. We show $\en(t)=\en((n,0))$.

We use the max metric. Since $y_i\leq M$ and $d((x_i,y_i),(0,0))>M$ for all $i\geq S$, we have $d((x_i,y_i),(0,0))=d(x_i,0)$ for all $i\geq S$. Also, since $d((x_i,y_i),(x_{i+1},y_{i+1}))\leq N$ for all $i\geq 0$, $d(x_i,x_{1+1})\leq N$ for all $i\geq 0$. Then, since $x_S>0$ and $d(x_i,0)>M>N$ for all $i\geq S$, $x_i>0$ for all $i\geq S$.  Given $R>0$, there is $T>0$ so that for all $i\geq T$, $d(0,x_i)>R$. We can assume $T\geq S$. Let $n$ be an integer greater than $R$. Then $q_T$ and $(n,0)$ can be joined by an $M$-chain outside $\B((0,0),R)$.
\end{example}

\end{document}